\documentclass[11pt]{article}
\usepackage[T1]{fontenc}
\usepackage{amsmath,amssymb,amsthm}
\usepackage[numbers,sort&compress]{natbib}
\usepackage[margin=1.05in]{geometry}
\usepackage[hidelinks]{hyperref}
\usepackage{fancyhdr}
\fancypagestyle{plain}{\fancyhf{}\fancyfoot[L]{\scriptsize General layout preview}\fancyfoot[C]{\thepage}}
\theoremstyle{plain}
\newtheorem{thm}{Theorem}[section]
\newtheorem{lemma}[thm]{Lemma}
\newtheorem{cor}[thm]{Corollary}
\newtheorem{problem}[thm]{Problem}
\theoremstyle{remark}
\newtheorem{rem}[thm]{Remark}
\numberwithin{equation}{section}
\title{A problem of Yang and Chen on weighted representation functions\thanks{This work was supported by the National Natural Science Foundation of China (Grant No. 12101009).}}
\author{Shuang-Shuang Li, Ya-Ting Xu and Xiao-Hui Yan\thanks{Corresponding author.}}
\date{}
\begin{document}
\maketitle
\begin{abstract}
Let $\mathbb N$ denote the set of nonnegative integers. For an integer $k>1$ and a set $A\subseteq\mathbb N$, let $R_{1,k}(A,n)$ denote the number of solutions of $n=a_1+ka_2$ with $a_1,a_2\in A$. For integers $k>1$ and $t\ge1$, Yang and Chen defined $f_k(t)$ to be the number of sets $A\subseteq\mathbb N$ for which
\[
R_{1,k}(A,n)=R_{1,k}(\mathbb N\setminus A,n)
\]
for all integers $n\ge t$, and asked whether $f_k(t)$ and $f_l(t)$ are eventually equal for any integers $k,l>1$. We prove that
\[
f_k(t)\asymp_k \frac{2^t}{t^{k/2}},
\]
which gives a negative answer to their problem.
\end{abstract}
\noindent\textit{2020 Mathematics subject classification:} primary 11B34; secondary 05A16.\par
\noindent\textit{Keywords and phrases:} weighted representation function, partition, S\'ark\"ozy problem.\par
\medskip

\section{Introduction}

Let $\mathbb N$ be the set of all nonnegative integers. For positive integers $k_1,k_2$, a set $A\subseteq\mathbb N$ and $n\in\mathbb N$, let $R_{k_1,k_2}(A,n)$ be the number of solutions of
\[
n=k_1a_1+k_2a_2,\qquad a_1,a_2\in A.
\]
Weighted representation functions and partitions of $\mathbb N$ have been studied in a series of papers; see, for example, \cite{YangChen2012,Yang2014,Qu2016,LiShanYan2024,ChenDingLuZhang2024,YanShan2025}.

In 2012, Yang and Chen \cite{YangChen2012} determined all pairs $(k_1,k_2)$ for which there exists a set $A\subseteq\mathbb N$ such that
\[
R_{k_1,k_2}(A,n)=R_{k_1,k_2}(\mathbb N\setminus A,n)
\]
for all sufficiently large integers $n$. For integers $k>1$ and $t\ge1$, they defined $f_k(t)$ to be the number of sets $A\subseteq\mathbb N$ such that
\begin{equation}\label{eq:equality}
R_{1,k}(A,n)=R_{1,k}(\mathbb N\setminus A,n)
\end{equation}
for all integers $n\ge t$. They proved that $f_k(t)$ is finite and
\[
\lim_{t\to\infty}\frac{\log f_k(t)}{t}=\log2.
\]
Moreover, they posed the following problem.

\begin{problem}[Yang and Chen \cite{YangChen2012}]\label{prob:YC}
Is it true that for any integers $k,l>1$, there exists $t_0=t_0(k,l)$ such that
\[
f_k(t)=f_l(t)
\]
for all integers $t\ge t_0$?
\end{problem}


Our main result is the following general counting estimate.

\begin{thm}\label{thm:main}
Let $k>1$ and $c$ be fixed integers. Then, as $T\to\infty$,
\[
\#\left\{A\subseteq\mathbb N:
R_{1,k}(A,n)-R_{1,k}(\mathbb N\setminus A,n)=c
\text{ for all integers }n\ge T\right\}
\asymp_{k,c}\frac{2^T}{T^{k/2}}.
\]
\end{thm}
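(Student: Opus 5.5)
We would work directly with the representation functions and reduce the problem to counting $0/1$ strings that satisfy $k$ linear equations. Write $\chi=1_A$ and $d_n=R_{1,k}(A,n)-R_{1,k}(\mathbb N\setminus A,n)$. Expanding $(\mathbb N\setminus A)$ via $1-\chi$ gives
\[
d_n=\sum_{\substack{a\le n\\ a\equiv n \ (\mathrm{mod}\ k)}}\chi(a)+\sum_{b\le \lfloor n/k\rfloor}\chi(b)-\Big(\Big\lfloor \frac nk\Big\rfloor+1\Big).
\]
Equivalently, in generating functions, $A(x)/(1-x^k)+A(x^k)/(1-x)-1/((1-x)(1-x^k))$. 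Taking differences in steps of $k$ gives
\[
d_n-d_{n-k}=\chi(n)+\chi(\lfloor n/k\rfloor)-1 .
\]
Hence "$d_n=c$ for all $n\ge T$" is equivalent to two conditions:
\begin{itemize}
\item[(i)] $d_n=c$ for the $k$ values $T\le n<T+k$;
\item[(ii)] $\chi(n)=1-\chi(\lfloor n/k\rfloor)$ for all $n\ge T+k$.
\end{itemize}
Condition (ii) always defines a valid $0/1$ extension, determined recursively, since $\lfloor n/k\rfloor<n$. So the quantity in Theorem~\ref{thm:main} equals exactly the number of strings $\chi$ on $[0,T+k)$ satisfying the $k$ equations (i). This also recovers finiteness.

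Next we make the equations (i) explicit. Put $m=\lfloor T/k\rfloor$, so each $n\in[T,T+k)$ has $\lfloor n/k\rfloor\in\{m,m+1\}$, and the $n$'s run over each residue class $r$ mod $k$ exactly once. Fix the bits $\chi$ on $[0,m+1]$ arbitrarily; there are $2^{m+O(1)}$ choices. Then each equation in (i) reads
\[
\#\{a\in A:\ m+1<a\le n,\ a\equiv r\}=N_r,
\]
where $N_r$ is an explicit target:
\[
N_r=c+\lfloor n/k\rfloor+1-\sum_{b\le\lfloor n/k\rfloor}\chi(b)-\#\{a\in A: a\le m+1,\ a\equiv r\}.
\]
The $k$ residue classes involve disjoint sets of free bits, each of size $L=(T-m)/k+O(1)\approx m(k-1)/k$. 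Therefore the count is
\[
\sum_{\chi|_{[0,m+1]}}\ \prod_{r=0}^{k-1}\binom{L_r}{N_r}.
\]
For a typical initial string, $\sum_{b\le m}\chi(b)\approx m/2$ and the class count is about $m/(2k)$. Then $N_r\approx m/2-m/(2k)\approx L/2$, with deviations $O(\sqrt m)$ that are themselves of central-limit size.

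Finally we estimate this sum. For the upper bound, use $\binom{L}{N}\le C\,2^L/\sqrt L$ in each factor. This gives at most
\[
2^{m}\cdot\prod_r 2^{L_r}/\sqrt{L_r}\ \ll_k\ 2^{T}/T^{k/2},
\]
since $m+\sum_r L_r=T+O(1)$. For the lower bound, restrict to initial strings for which $\sum_{b\le m}\chi(b)$ and each class count lie within $O(\sqrt m)$ of their means. By the central limit theorem (or Chebyshev), a positive proportion of the $2^{m}$ strings qualify. For these, each $N_r=L_r/2+O_k(\sqrt{L_r})$, and $c$ contributes only $O(1)$. Hence $\binom{L_r}{N_r}\gg_{k,c}2^{L_r}/\sqrt{L_r}$, and multiplying out gives $\gg_{k,c} 2^T/T^{k/2}$.

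The main obstacle is the bookkeeping in the reduction step. One must pin down precisely which bits enter each of the $k$ equations, so that the boundary indices $m,m+1$ and the residue classes yield disjoint blocks of free bits, and check that the shared bits lie only in the initial segment we condition on. I would handle this by conditioning on $\chi$ up to $m+1$ (a bounded overlap). The dependence of $N_r$ on these bits only shifts the targets by $O(\sqrt m)$ in the relevant range, so the lower bound only requires the joint concentration of $k+1$ partial sums of independent bits.
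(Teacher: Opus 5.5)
Your proposal is correct and follows essentially the same route as the paper. Both use the step-$k$ difference identity to reduce to the $k$ boundary equations on $[0,T+k)$, condition on the initial segment $[0,m+1]$ (the paper's $C=[0,Q]$, up to one extra bit), and solve the resulting disjoint residue-class blocks with a central binomial bound for the upper bound and Chebyshev concentration plus a local binomial estimate for the lower bound. Your only real deviation is working with $0/1$ indicators instead of $\pm1$ signs, which makes the paper's parity check $h_s\equiv M_s \pmod 2$ automatic.
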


Taking $c=0$ gives the correct order of $f_k(t)$.

\begin{cor}\label{cor:f}
Let $k>1$ be fixed. Then, as $t\to\infty$,
\[
f_k(t)\asymp_k\frac{2^t}{t^{k/2}}.
\]
\end{cor}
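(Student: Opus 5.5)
The corollary is a direct specialization of Theorem~\ref{thm:main}, so I would simply unwind the definitions and apply the theorem with suitable parameters.

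First, fix $k>1$ and take $c=0$ in Theorem~\ref{thm:main}. For an integer $T$, the set counted there is
\[
\left\{A\subseteq\mathbb N:\ R_{1,k}(A,n)-R_{1,k}(\mathbb N\setminus A,n)=0\ \text{for all integers } n\ge T\right\}.
\]
This is exactly the set of $A\subseteq\mathbb N$ satisfying \eqref{eq:equality} for all $n\ge T$. Putting $T=t$, its cardinality is therefore $f_k(t)$ by the definition of Yang and Chen.

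Second, Theorem~\ref{thm:main} then gives constants $0<C_1\le C_2$ and a threshold $t_1$, all depending only on $k$ (since $c=0$ is fixed), such that
\[
C_1\frac{2^t}{t^{k/2}}\le f_k(t)\le C_2\frac{2^t}{t^{k/2}}\qquad (t\ge t_1).
\]
This is precisely the claim $f_k(t)\asymp_k 2^t/t^{k/2}$. There is no real obstacle in this step: all the work lies in Theorem~\ref{thm:main}, which I take as given.

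Third, I would record why this answers Problem~\ref{prob:YC} negatively. Take $1<k<l$. By the corollary,
\[
\frac{f_k(t)}{f_l(t)}\asymp_{k,l} t^{(l-k)/2}\to\infty,
\]
so $f_k(t)>f_l(t)$ for all sufficiently large $t$. In particular, $f_k(t)=f_l(t)$ cannot hold for all $t\ge t_0$.
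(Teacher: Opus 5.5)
Your proposal is correct and matches the paper: Corollary~\ref{cor:f} is obtained by setting $c=0$ and $T=t$ in Theorem~\ref{thm:main}, since the counted set is then exactly the family defining $f_k(t)$. Your third step, the ratio $f_k(t)/f_l(t)\asymp_{k,l} t^{(l-k)/2}$, is the same argument the paper uses for Corollary~\ref{cor:problem} and the negative answer to Problem~\ref{prob:YC}.
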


If $1<k<l$, then Corollary \ref{cor:f} gives
\[
\frac{f_k(t)}{f_l(t)}
\gg_{k,l}t^{(l-k)/2}\longrightarrow\infty.
\]
Hence, $f_k(t)>f_l(t)$ for all sufficiently large integers $t$.

\begin{cor}\label{cor:problem}
If $1<k<l$, then $f_k(t)>f_l(t)$ for all sufficiently large integers $t$.
\end{cor}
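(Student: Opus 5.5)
The plan is to deduce Corollary \ref{cor:problem} directly from Corollary \ref{cor:f}, which in turn is the case $c=0$ of Theorem \ref{thm:main}. No new combinatorial input is needed. The only task is to compare the two-sided bounds for $f_k$ and $f_l$ and check that the implied constants cannot undo the polynomial gap between $t^{k/2}$ and $t^{l/2}$.

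First, fix $1<k<l$. Applying Corollary \ref{cor:f} to $k$ and to $l$ separately gives constants $c_k,C_l>0$ and a threshold $t_1=t_1(k,l)$ such that
\[
f_k(t)\ge c_k\frac{2^t}{t^{k/2}},\qquad f_l(t)\le C_l\frac{2^t}{t^{l/2}}\qquad (t\ge t_1).
\]
Only the lower bound for the smaller weight $k$ and the upper bound for the larger weight $l$ are used. For $t\ge t_1$ the upper bound forces $f_l(t)>0$, since $f_l(t)$ counts a nonempty family once $t$ is large (the lower bound of Corollary \ref{cor:f} for $l$ guarantees this). Dividing the two bounds then gives
\[
\frac{f_k(t)}{f_l(t)}\ge\frac{c_k}{C_l}\,t^{(l-k)/2}.
\]

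Second, since $l-k\ge1$, the right-hand side tends to infinity. Choose $t_0\ge t_1$ with $\frac{c_k}{C_l}t_0^{(l-k)/2}>1$. Then $f_k(t)>f_l(t)$ for every integer $t\ge t_0$. In particular $f_k(t)\ne f_l(t)$ for all large $t$, which answers Problem \ref{prob:YC} negatively.

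There is no real obstacle in this deduction. All the difficulty is contained in Theorem \ref{thm:main}, namely obtaining matching upper and lower bounds of the exact order $2^T T^{-k/2}$. The only points that need care here are, first, that the constants in $\asymp_k$ depend only on $k$ (respectively $l$) and not on $t$, and second, that the division is legitimate because $f_l(t)>0$ for large $t$.
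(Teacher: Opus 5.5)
Your deduction is correct and is exactly the paper's argument: combining the lower bound of Corollary \ref{cor:f} for $k$ with its upper bound for $l$ gives $f_k(t)/f_l(t)\gg_{k,l}t^{(l-k)/2}\to\infty$. One small slip: it is the lower bound for $l$, not the upper bound, that makes $f_l(t)>0$. You can avoid the division altogether by chaining $f_k(t)\ge c_k2^t t^{-k/2}>C_l2^t t^{-l/2}\ge f_l(t)$ for large $t$.
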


Therefore, Problem \ref{prob:YC} has a negative answer.

\begin{rem}\label{rem:g}
Yan and Shan \cite{YanShan2025} defined $g_{k,m}(t)$, for integers $k>1$, $m\ge0$ and $t\ge0$, to be the number of sets $A\subseteq\mathbb N$ such that
\[
R_{1,k}(A,n)-R_{1,k}(\mathbb N\setminus A,n)=m+1
\]
for all integers $n\ge mk+t$. They also asked for the correct order of $g_{k,m}(t)$. In fact, taking $c=m+1$ and $T=mk+t$ in Theorem \ref{thm:main}, for each fixed $m$ and $k$ we obtain
\[
g_{k,m}(t)
\asymp_{k,m}
\frac{2^{mk+t}}{(mk+t)^{k/2}}
\asymp_{k,m}\frac{2^t}{t^{k/2}}.
\]
Hence
\[
\lim_{t\to\infty}\frac{\log g_{k,m}(t)}{t}=\log2.
\]
\end{rem}

\section{Lemmas}

For a fixed integer $k>1$ and a set $A\subseteq\mathbb N$, let
\[
\mathbf 1_A(j)=
\begin{cases}
1,&j\in A,\\
0,&j\notin A,
\end{cases}
\qquad
\varepsilon_j=\mathbf1_A(j)-\mathbf1_{\mathbb N\setminus A}(j)=
\begin{cases}
1,&j\in A,\\
-1,&j\notin A,
\end{cases}
\]
and
\[
D_A(n)=R_{1,k}(A,n)-R_{1,k}(\mathbb N\setminus A,n).
\]

\begin{lemma}\label{lem:basic}
If $n=kq+s$, where $q\ge0$ and $0\le s<k$, then
\begin{equation}\label{eq:basic}
2D_A(n)=\sum_{j=0}^{q}\varepsilon_j+\sum_{j=0}^{q}\varepsilon_{kj+s}.
\end{equation}
If $n\ge k$, then
\begin{equation}\label{eq:rec}
2D_A(n)-2D_A(n-k)=\varepsilon_{\lfloor n/k\rfloor}+\varepsilon_n.
\end{equation}
\end{lemma}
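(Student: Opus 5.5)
We prove \eqref{eq:basic} by rewriting $D_A(n)$ as a single sum of products of the signs $\varepsilon_j$, and then read off \eqref{eq:rec} by comparing \eqref{eq:basic} at $n$ and at $n-k$.

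\textbf{Step 1: a single sum over representations.} Write
\[
R_{1,k}(A,n)=\sum_{\substack{a_1+ka_2=n\\ a_1,a_2\ge0}}\mathbf 1_A(a_1)\mathbf 1_A(a_2),
\]
and similarly for $\mathbb N\setminus A$. Substitute $\mathbf 1_A=(1+\varepsilon)/2$ and $\mathbf 1_{\mathbb N\setminus A}=(1-\varepsilon)/2$. For each pair $(a_1,a_2)$ the summand difference becomes
\[
\tfrac14\bigl[(1+\varepsilon_{a_1})(1+\varepsilon_{a_2})-(1-\varepsilon_{a_1})(1-\varepsilon_{a_2})\bigr]=\tfrac12(\varepsilon_{a_1}+\varepsilon_{a_2}).
\]
Hence
\[
2D_A(n)=\sum_{\substack{a_1+ka_2=n\\ a_1,a_2\ge0}}(\varepsilon_{a_1}+\varepsilon_{a_2}).
\]

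\textbf{Step 2: parametrize the solutions.} The solutions are exactly $a_2=j$, $a_1=n-kj$ for $0\le j\le q$, where $n=kq+s$. The $\varepsilon_{a_2}$ terms therefore contribute $\sum_{j=0}^q\varepsilon_j$. The values $a_1=n-kj$ run over $s,s+k,\dots,s+kq$ as $j$ runs over $0,\dots,q$. After reindexing $j\mapsto q-j$, the $\varepsilon_{a_1}$ terms contribute $\sum_{j=0}^q\varepsilon_{kj+s}$. This gives \eqref{eq:basic}.

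\textbf{Step 3: the recurrence.} For $n\ge k$ we have $q\ge1$ and $n-k=k(q-1)+s$ with the same residue $s$. Subtracting \eqref{eq:basic} at $n-k$ from \eqref{eq:basic} at $n$, each sum loses only its last term. What remains is $\varepsilon_q+\varepsilon_{kq+s}$, and since $q=\lfloor n/k\rfloor$ and $kq+s=n$, this is \eqref{eq:rec}.

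There is no real obstacle here. The only points needing care are the reindexing of the $a_1$-sum and checking that $q\ge1$ when $n\ge k$, so that $n-k$ has the same residue $s$ and quotient $q-1\ge0$.
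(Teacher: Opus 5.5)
Your proposal is correct and matches the paper's proof essentially step for step. You derive the same pointwise identity $2\bigl(\mathbf 1_A(x)\mathbf 1_A(y)-\mathbf 1_{\mathbb N\setminus A}(x)\mathbf 1_{\mathbb N\setminus A}(y)\bigr)=\varepsilon_x+\varepsilon_y$ (the paper calls it clear, you check it via $\mathbf 1_A=(1+\varepsilon)/2$), use the same parametrization $(n-kj,j)$ with reindexing, and obtain \eqref{eq:rec} by subtracting \eqref{eq:basic} at $n-k$ from \eqref{eq:basic} at $n$.
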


\begin{proof}
Clearly, for any integers $x,y\in\mathbb N$,
\[
2\{\mathbf 1_A(x)\mathbf 1_A(y)
-\mathbf 1_{\mathbb N\setminus A}(x)
 \mathbf 1_{\mathbb N\setminus A}(y)\}
=\varepsilon_x+\varepsilon_y.
\]
Since $n=kq+s$, the solutions of $n=a_1+ka_2$ with $a_1,a_2\in\mathbb{N}$ are
\[
(a_1,a_2)=(n-kj,j),\qquad 0\le j\le q.
\]
Hence,
\[
\begin{aligned}
2D_A(n)
&=2\sum_{j=0}^{q}\left(\mathbf 1_A(n-kj)\mathbf 1_A(j)
-\mathbf 1_{\mathbb N\setminus A}(n-kj)
 \mathbf 1_{\mathbb N\setminus A}(j)\right)\\
&=\sum_{j=0}^{q}(\varepsilon_{n-kj}+\varepsilon_j)\\
&=\sum_{j=0}^{q}\varepsilon_j
 +\sum_{j=0}^{q}\varepsilon_{k(q-j)+s}\\
&=\sum_{j=0}^{q}\varepsilon_j
 +\sum_{j=0}^{q}\varepsilon_{kj+s},
\end{aligned}
\]
which proves \eqref{eq:basic}. If $n\ge k$, then $q\ge1$ and
\[
\begin{aligned}
2D_A(n)-2D_A(n-k)
&=\left(\sum_{j=0}^{q}\varepsilon_j-
        \sum_{j=0}^{q-1}\varepsilon_j\right)
 +\left(\sum_{j=0}^{q}\varepsilon_{kj+s}-
        \sum_{j=0}^{q-1}\varepsilon_{kj+s}\right)\\
&=\varepsilon_q+\varepsilon_{kq+s}\\
&=\varepsilon_{\lfloor n/k\rfloor}+\varepsilon_n.
\end{aligned}
\]
This completes the proof of Lemma \ref{lem:basic}.
\end{proof}

\begin{lemma}\label{lem:boundary}
Let $c$ be an integer and $T$ be a positive integer. Then
\[
D_A(n)=c
\]
for all integers $n\ge T$ if and only if
\[
D_A(n)=c
\]
for all integers $T\le n\le T+k-1$, and
\begin{equation}\label{eq:tail}
\varepsilon_n=-\varepsilon_{\lfloor n/k\rfloor}
\qquad(n\ge T+k).
\end{equation}
\end{lemma}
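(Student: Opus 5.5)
The lemma follows from the recursion \eqref{eq:rec} of Lemma \ref{lem:basic}. Rewrite \eqref{eq:rec} as
\[
D_A(n)-D_A(n-k)=\tfrac12\bigl(\varepsilon_{\lfloor n/k\rfloor}+\varepsilon_n\bigr)\qquad(n\ge k).
\]
Since each $\varepsilon_j\in\{1,-1\}$, the right-hand side vanishes exactly when $\varepsilon_n=-\varepsilon_{\lfloor n/k\rfloor}$. The whole argument consists of comparing $D_A(n)$ with $D_A(n-k)$ and using this equivalence.

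For the forward direction, assume $D_A(n)=c$ for all $n\ge T$. The condition on the window $T\le n\le T+k-1$ is then immediate. For $n\ge T+k$ we have $n\ge k$, because $T\ge1$, and also $n-k\ge T$. Hence $D_A(n)=D_A(n-k)=c$. By \eqref{eq:rec} this gives $\varepsilon_{\lfloor n/k\rfloor}+\varepsilon_n=0$, which is \eqref{eq:tail}.

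For the converse, assume $D_A(n)=c$ on the window and that \eqref{eq:tail} holds. Argue by strong induction on $n\ge T$. The base cases are exactly $T\le n\le T+k-1$. For $n\ge T+k$, \eqref{eq:tail} makes the right-hand side of \eqref{eq:rec} vanish, so $D_A(n)=D_A(n-k)$. Since $T\le n-k<n$, the induction hypothesis gives $D_A(n-k)=c$, and therefore $D_A(n)=c$.

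There is no genuine obstacle here. The one point to check is that \eqref{eq:rec} is applicable, which requires $n\ge k$. This holds because $n\ge T+k>k$, using the hypothesis that $T$ is positive.
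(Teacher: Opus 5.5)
Your proof is correct and is essentially the paper's argument: the forward direction reads off \eqref{eq:tail} from \eqref{eq:rec} using $D_A(n)=D_A(n-k)=c$, and the converse propagates the window values through $D_A(n)=D_A(n-k)$. You phrase that propagation as strong induction, while the paper steps down by multiples of $k$ into $[T,T+k-1]$. One small remark: $n\ge T+k$ already gives $n\ge k$ for any $T\ge 0$, so positivity of $T$ is not needed to apply \eqref{eq:rec}.
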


\begin{proof}
Suppose that $D_A(n)=c$ for all $n\ge T$. Clearly,  $D_A(n)=c$ for all $T\le n\le T+k-1$. For $n\ge T+k$, by
\eqref{eq:rec},
\[
0=2D_A(n)-2D_A(n-k)
 =\varepsilon_{\lfloor n/k\rfloor}+\varepsilon_n,
\]
and hence \eqref{eq:tail} holds.

Conversely, suppose that $D_A(n)=c$ for all $T\le n\le T+k-1$ and
\eqref{eq:tail} holds. By \eqref{eq:rec} and \eqref{eq:tail} we have, for all integers $n\ge T+k$,
\[
2D_A(n)-2D_A(n-k)
=\varepsilon_{\lfloor n/k\rfloor}+\varepsilon_n=0.
\]
Thus, for all integers $n\ge T+k$,
\[
D_A(n)=D_A(n-k).
\]
For every $n\ge T$, there exists a nonnegative integer $r$ such that
\[
T\le n-rk\le T+k-1.
\]
It follows that
\[
D_A(n)=D_A(n-k)=\cdots=D_A(n-rk)=c.
\]
This completes the proof of Lemma \ref{lem:boundary}.
\end{proof}

\begin{lemma}\label{lem:binomial}
Let $M$ be a positive integer and $h$ be an integer. Then
\begin{align*}
&\#\left\{(\delta_1,\ldots,\delta_M)\in\{-1,1\}^M:
\delta_1+\cdots+\delta_M=h\right\}\\
&\qquad =
\begin{cases}
\binom{M}{(M+h)/2},&h\equiv M\pmod2\text{ and }|h|\le M,\\
0,&\text{otherwise}.
\end{cases}
\end{align*}
In particular,
\[
\#\left\{(\delta_1,\ldots,\delta_M)\in\{-1,1\}^M:
\delta_1+\cdots+\delta_M=h\right\}\ll \frac{2^M}{\sqrt M}.
\]
\end{lemma}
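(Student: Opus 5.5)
The plan is to obtain the exact count by a bijection with subsets, and then derive the uniform bound from the central binomial coefficient.

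\textbf{Exact count.} Let $p$ be the number of entries equal to $1$ in $(\delta_1,\ldots,\delta_M)$. Then $M-p$ entries equal $-1$, so
\[
\delta_1+\cdots+\delta_M=p-(M-p)=2p-M.
\]
The condition $\delta_1+\cdots+\delta_M=h$ is therefore equivalent to $p=(M+h)/2$. Sending a tuple to the set of positions where it equals $1$ gives a bijection with subsets of $\{1,\ldots,M\}$ of size $p$.

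\begin{itemize}
\item If $h\equiv M\pmod 2$ and $|h|\le M$, then $(M+h)/2$ is an integer in $[0,M]$, and the count is $\binom{M}{(M+h)/2}$.
\item If $h\not\equiv M\pmod 2$, then $(M+h)/2$ is not an integer, so no tuple qualifies.
\item If $|h|>M$, then $(M+h)/2$ lies outside $[0,M]$, so again no tuple qualifies.
\end{itemize}

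\textbf{Uniform bound.} It suffices to show $\binom{M}{j}\le\binom{M}{\lfloor M/2\rfloor}$ for all $j$, and then that $\binom{M}{\lfloor M/2\rfloor}\ll 2^M/\sqrt M$. The first inequality is unimodality of binomial coefficients. It follows from the ratio
\[
\frac{\binom{M}{j+1}}{\binom{M}{j}}=\frac{M-j}{j+1},
\]
which is at least $1$ exactly when $j\le (M-1)/2$.

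For the central coefficient there are two routes.

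\begin{itemize}
\item \emph{Via Stirling's formula.} This gives $\binom{2m}{m}\sim 4^m/\sqrt{\pi m}$. Combine it with $\binom{2m+1}{m}=\tfrac12\binom{2m+2}{m+1}$ to handle odd $M$.
\item \emph{Elementary, avoiding Stirling.} Put $a_m=\binom{2m}{m}4^{-m}$. Then $a_{m}/a_{m-1}=(2m-1)/(2m)$, so
\[
a_m^2=\prod_{i=1}^m\frac{(2i-1)^2}{(2i)^2}\le \prod_{i=1}^m\frac{2i-1}{2i+1}=\frac1{2m+1}.
\]
Hence $\binom{2m}{m}\le 4^m/\sqrt{2m+1}$. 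The odd case follows by the halving identity above.
\end{itemize}

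For small $M$ the bound holds trivially, since the count is always at most $2^M$.

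\textbf{Main obstacle.} The only non-routine point is the uniform $\ll 2^M/\sqrt M$ bound. I would use the elementary product estimate, since it gives an explicit constant with no asymptotic analysis.
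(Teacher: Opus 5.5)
Your proposal is correct and follows the paper's route. You count tuples by their number of $+1$ entries, bound the result by the central coefficient $\binom{M}{\lfloor M/2\rfloor}$, and then bound that by $2^M/\sqrt M$. The only difference is that where the paper cites the Wallis inequalities (and takes unimodality for granted), you prove both directly; your telescoping product gives the explicit bound $\binom{M}{\lfloor M/2\rfloor}\le 2^M/\sqrt{M+1}$ for every $M\ge1$, so the remark about small $M$ is unnecessary.
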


\begin{proof}
Let $\boldsymbol{\delta}=(\delta_1,\ldots,\delta_M)\in\{-1,1\}^M$ with
\[
\delta_1+\cdots+\delta_M=h.
\]
Then
\[
h=U_{\boldsymbol{\delta}}-(M-U_{\boldsymbol{\delta}})=2U_{\boldsymbol{\delta}}-M,
\]
where
\[
U_{\boldsymbol{\delta}}=\#\{i: \delta_i=1, 1\le i\le M\}.
\]
Thus
$h\equiv M\pmod2$, $|h|\le M$, and
\[
U_{\boldsymbol{\delta}}=\frac{M+h}{2}.
\]
Hence,
\begin{align*}
&\#\left\{(\delta_1,\ldots,\delta_M)\in\{-1,1\}^M:
\delta_1+\cdots+\delta_M=h\right\}\\
&\qquad =
\begin{cases}
\binom{M}{(M+h)/2},&h\equiv M\pmod2\text{ and }|h|\le M,\\
0,&\text{otherwise}.
\end{cases}
\end{align*}
By the Wallis inequalities (see \cite[p.~398, Eq.~(4)]{ChenQi2005}),
\[
\binom{M}{\lfloor M/2\rfloor}\asymp\frac{2^M}{\sqrt M}.
\]
It follows that if $h\equiv M\pmod2$ and $|h|\le M$, then
\[
\binom{M}{(M+h)/2}\le\binom{M}{\lfloor M/2\rfloor}\ll\frac{2^M}{\sqrt M}.
\]
This completes the proof of Lemma \ref{lem:binomial}.
\end{proof}
\begin{lemma}\label{lem:binomial2}
Let $H$ be a positive number, $M$ be a sufficiently large integer, $h$ be an integer with $h\equiv M\pmod2$. If $|h|\le H\sqrt M$, then
\[
\binom{M}{(M+h)/2}\asymp_H\frac{2^M}{\sqrt M}.
\]
\end{lemma}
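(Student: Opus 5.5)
The upper bound is already available: Lemma \ref{lem:binomial} gives $\binom{M}{(M+h)/2}\le\binom{M}{\lfloor M/2\rfloor}\ll 2^M/\sqrt M$ for every admissible $h$. So only the lower bound $\binom{M}{(M+h)/2}\gg_H 2^M/\sqrt M$ needs an argument. I would compare the binomial coefficient with the central one, again using $\binom{M}{\lfloor M/2\rfloor}\asymp 2^M/\sqrt M$ from the Wallis inequalities. By symmetry $\binom{M}{(M+h)/2}=\binom{M}{(M-h)/2}$, so I may assume $h\ge0$.

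First I would write $m=\lfloor M/2\rfloor$ and $j=(M+h)/2$. Then $j-m=r$ with $0\le r\le h/2+1$, an integer, so $r\le H\sqrt M/2+1$. Next I would express the ratio as a telescoping product:
\[
\frac{\binom{M}{j}}{\binom{M}{m}}=\prod_{i=0}^{r-1}\frac{M-m-i}{m+i+1}.
\]
Each factor satisfies
\[
\frac{M-m-i}{m+i+1}\ge\frac{M/2-i-1}{M/2+i+1}=\frac{1-x_i}{1+x_i},\qquad x_i=\frac{2(i+1)}{M}.
\]
Since $x_i\le 2r/M\le (H\sqrt M+2)/M\to0$, for $M$ sufficiently large (depending on $H$) we have $x_i\le 1/2$. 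In that range $\log\frac{1-x}{1+x}\ge -3x$, so
\[
\log\frac{\binom{M}{j}}{\binom{M}{m}}\ge -3\sum_{i=0}^{r-1}\frac{2(i+1)}{M}=-\frac{3r(r+1)}{M}\ge -C_H,
\]
because $r(r+1)\le (H\sqrt M/2+2)^2\ll_H M$. Hence $\binom{M}{j}\ge e^{-C_H}\binom{M}{m}\gg_H 2^M/\sqrt M$, which gives the lower bound.

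The main obstacle is keeping the inequalities uniform: the bound on each factor must hold for all $i<r$, and the phrase "$M$ sufficiently large" is exactly what guarantees $x_i\le1/2$. The parity condition $h\equiv M\pmod 2$ is needed only to make $(M+h)/2$ an integer. If needed, I would also note that $|h|\le M$ holds automatically for large $M$, since $H\sqrt M\le M$.
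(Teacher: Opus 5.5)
Your proof is correct and follows essentially the same route as the paper: compare $\binom{M}{(M+h)/2}$ with $\binom{M}{\lfloor M/2\rfloor}$ through the telescoping product of consecutive ratios, then show the logarithm of that product is $O(r^2/M)=O_H(1)$. The differences are cosmetic: you use the symmetry $h\mapsto -h$ instead of treating the case $d<0$ separately, and you bound each factor via $\log\frac{1-x}{1+x}\ge -3x$ rather than replacing every factor by the worst one raised to the $d$-th power.
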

\begin{proof}
For the upper bound, we have
\[
\binom{M}{(M+h)/2}\le\binom{M}{\lfloor M/2\rfloor}\ll\frac{2^M}{\sqrt M}.
\]
For the lower bound, let
\[
d=\frac{M+h}{2}-\left\lfloor\frac M2\right\rfloor.
\]
If $|h|\le H\sqrt M$, then
\[
|d|\le \frac{|h|}{2}+1\le \frac H2\sqrt M+1.
\]
For $M$ sufficiently large, we have $|d|\le M/4$ and $M/2-|d|>0$.

If $d=0$, then
\[
\binom{M}{(M+h)/2}=\binom{M}{\lfloor M/2\rfloor}\gg_H\frac{2^M}{\sqrt M}.
\]

If $d>0$, by
\[
\frac{\binom{M}{u+1}}{\binom{M}{u}}
=\frac{M-u}{u+1},
\]
then
\[
\frac{\binom{M}{(M+h)/2}}{\binom{M}{\lfloor M/2\rfloor}}
=
\prod_{i=0}^{d-1}\frac{M-\lfloor M/2\rfloor-i}{\lfloor M/2\rfloor+i+1}
\ge
\left(\frac{M/2-d}{M/2+d}\right)^d.
\]
Since
\[
\log\left(\frac{M/2-d}{M/2+d}\right)
=O\left(\frac dM\right),
\]
it follows that
\[
\log\left(
\left(\frac{M/2-d}{M/2+d}\right)^d
\right)
=O\left(\frac{d^2}{M}\right)=O_H(1).
\]
Hence
\[
\binom{M}{(M+h)/2}\gg_H\binom{M}{\lfloor M/2\rfloor}
\gg_H\frac{2^M}{\sqrt M}.
\]

Similarly, if $d<0$, by
\[
\frac{\binom{M}{u-1}}{\binom{M}{u}}
=\frac{u}{M-u+1},
\]
then
\[
\frac{\binom{M}{(M+h)/2}}{\binom{M}{\lfloor M/2\rfloor}}
=
\prod_{i=0}^{|d|-1}\frac{\lfloor M/2\rfloor-i}{M-\lfloor M/2\rfloor+i+1}
\ge
\left(\frac{M/2-|d|}{M/2+|d|}\right)^{|d|}\gg_H 1.
\]
Hence,
\[
\binom{M}{(M+h)/2}\gg_H\binom{M}{\lfloor M/2\rfloor}
\gg_H\frac{2^M}{\sqrt M}.
\]
This completes the proof of Lemma \ref{lem:binomial2}.
\end{proof}

Let $T$ be a positive integer. For every integer $0\le s<k$, let $n_s$ be the unique integer in $[T,T+k-1]$
with $n_s\equiv s\pmod k$, and let
\[
q_s=\frac{n_s-s}{k},
\qquad
Q=\max_{0\le s<k}q_s.
\]
Define
\[
P_s=\{x:Q<x\le n_s,\ x\equiv s\pmod k\},
\qquad
M_s=|P_s|.
\]
It is clear that $P_0,\cdots, P_{k-1}$ are pairwise disjoint.
\begin{lemma}\label{lem:blocks}
Let $0\le s<k$ be an integer. Then
\begin{equation}\label{eq:Ms}
M_s=\frac{k-1}{k^2}T+O_k(1).
\end{equation}
\end{lemma}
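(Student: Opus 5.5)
The plan is to count the residue class $s$ in the interval $(Q,n_s]$ directly, since $M_s$ is just the number of integers $x\equiv s\pmod k$ with $Q<x\le n_s$. First we estimate the two endpoints. By definition $T\le n_s\le T+k-1$, so $n_s=T+O_k(1)$. Next, $q_s=(n_s-s)/k$ with $0\le s<k$, so
\[
q_s=\frac{T}{k}+O_k(1)
\]
uniformly in $s$. Taking the maximum over the $k$ values of $s$ gives $Q=T/k+O_k(1)$ as well.

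Next we count. For any real numbers $a<b$, the number of integers $x\equiv s\pmod k$ in $(a,b]$ equals $(b-a)/k+O(1)$, since every block of $k$ consecutive integers contains exactly one such $x$. We need the interval to be nonempty in the sense $Q<n_s$. For $T$ large this holds, because $Q\le (T+k-1)/k<T\le n_s$. For bounded $T$ the claimed estimate is absorbed into the $O_k(1)$ term anyway, since then $M_s\le n_s+1=O_k(1)$. Hence
\[
M_s=\frac{n_s-Q}{k}+O(1)=\frac{T-T/k}{k}+O_k(1)=\frac{k-1}{k^2}T+O_k(1),
\]
which is \eqref{eq:Ms}.

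There is no real obstacle here. The only point needing care is to check that all error terms are uniform in $s$ and depend only on $k$. They do, because $0\le s<k$ and $n_s$ ranges over a window of length $k$.
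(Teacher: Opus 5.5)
Your proposal is correct and takes essentially the same route as the paper: both show $q_s=T/k+O_k(1)$ and $Q=T/k+O_k(1)$ uniformly in $s$, and then count the residue class $s$ in the interval $(Q,n_s]$. The only cosmetic difference is that the paper writes this count exactly as $M_s=q_s-\lfloor (Q-s)/k\rfloor$, whereas you use the generic estimate $(n_s-Q)/k+O(1)$ together with a harmless check that $Q<n_s$.
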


\begin{proof}
Let $Q<x\le n_s$ and $x\in P_s$. Write $x=s+kj$ with $j\ge0$. Then
\[
M_s
=\#\{j:(Q-s)/k<j\le q_s\}
=q_s-\left\lfloor\frac{Q-s}{k}\right\rfloor.
\]
Since
\[
0\le s<k,
\qquad
T\le n_s\le T+k-1,
\]
it follows that
\[
q_s=\frac{n_s-s}{k}=\frac Tk+O_k(1),
\qquad
Q=\max_{0\le s<k}q_s=\frac Tk+O_k(1).
\]
Hence,
\[
\begin{aligned}
M_s
&=q_s-\frac Qk+O_k(1)\\
&=\frac Tk-\frac{T}{k^2}+O_k(1)\\
&=\frac{k-1}{k^2}T+O_k(1).
\end{aligned}
\]
This completes the proof of Lemma \ref{lem:blocks}.
\end{proof}

Let
\begin{equation}\label{eq:C}
C=\{0,1,\ldots,T+k-1\}\setminus
\bigcup_{s=0}^{k-1}P_s.
\end{equation}
Since $P_0,\cdots, P_{k-1}$ are pairwise disjoint, it follows from Lemma \ref{lem:basic} that
\begin{equation}\label{eq:boundaryeq}
D_A(n_s)=c
\quad\Longleftrightarrow\quad
\sum_{j=0}^{q_s}\varepsilon_j+
\sum_{j=0}^{q_s}\varepsilon_{kj+s}=2c
\quad\Longleftrightarrow\quad
\sum_{x\in C}a_{s,x}\varepsilon_x+\sum_{x\in P_s}\varepsilon_x=2c,
\end{equation}
where
\[
a_{s,x}=\mathbf 1_{\{0,\cdots,q_s\}}(x)+\mathbf 1_{\{kj+s: 0\le j\le q_s\}}(x).
\]
It follows from Lemma \ref{lem:boundary} that in order to complete the proof of Theorem \ref{thm:main}, it suffices to determine the number of sign vectors
\[
(\varepsilon_0,\ldots,\varepsilon_{T+k-1})\in\{-1,1\}^{T+k}
\]
satisfying
\[
\sum_{x\in C}a_{s,x}\varepsilon_x+
\sum_{x\in P_s}\varepsilon_x=2c,
\qquad 0\le s<k.
\]

\begin{lemma}\label{lem:goodcore}
Let $k>1$ and $T\ge k$ be integers, and let $C$ be the set defined by \eqref{eq:C}. Then
\[
 \#\left\{(\varepsilon_x)_{x\in C}\in\{1,-1\}^{|C|}:
 \left|\sum_{x\in C}a_{s,x}\varepsilon_x\right|\le4\sqrt{kT}\text{ for every }0\le s<k\right\}\ge 2^{|C|-1}.
\]
\end{lemma}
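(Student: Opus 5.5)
Consider $(\varepsilon_x)_{x\in C}$ chosen uniformly at random from $\{1,-1\}^{|C|}$. For each $s$ put $S_s=\sum_{x\in C}a_{s,x}\varepsilon_x$. The plan is a second-moment argument combined with a union bound. We show that for each fixed $s$ the probability that $|S_s|>4\sqrt{kT}$ is less than $1/(2k)$. Summing over the $k$ residues $s$, the probability that some $|S_s|$ exceeds $4\sqrt{kT}$ is then less than $1/2$. Hence at least $2^{|C|-1}$ sign vectors satisfy all $k$ inequalities, which is the claim.

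\textbf{Step 1: bound the coefficients and the size of $C$.} Each coefficient is an integer with $0\le a_{s,x}\le 2$, since it is a sum of two indicator functions. Also $C\subseteq\{0,1,\ldots,T+k-1\}$, so $|C|\le T+k\le 2T$, using the hypothesis $T\ge k$.

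\textbf{Step 2: compute the second moment.} The $\varepsilon_x$ are independent with mean $0$ and variance $1$. Therefore
\[
\mathbb E\, S_s^2=\sum_{x\in C}a_{s,x}^2\le 4|C|\le 8T .
\]
By Chebyshev's inequality,
\[
\Pr\left(|S_s|>4\sqrt{kT}\right)\le\frac{8T}{16kT}=\frac1{2k}.
\]

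\textbf{Step 3: remove the borderline case.} Summing Step 2 over $s$ only gives a failure probability of at most $1/2$, so the count of good vectors is at least $2^{|C|-1}$, which is exactly what we need. Equality is allowed, so nothing more is required. For strict safety, one can instead use the sharper bound $a_{s,x}^2\le 4$ together with the fact that $a_{s,x}=2$ only when $x\le q_s$ and $x\equiv s\pmod k$. There are about $T/k^2$ such $x$, which gives $\mathbb E\,S_s^2\le |C|+3(q_s+1)\le T+k+O(T/k)$. This is well below $8T$ and yields a strict inequality.

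\textbf{Main obstacle.} The only delicate point is getting the constants right so that the union bound gives exactly $1/2$. The factor $k$ in the threshold $4\sqrt{kT}$ is precisely what absorbs the $k$ terms in the union bound. The estimate $|C|\le 2T$ needs the hypothesis $T\ge k$, and I would check that step carefully. Nothing else is needed: no lower bound on $|C|$ and no information about the structure of the blocks $P_s$.
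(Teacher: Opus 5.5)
Your proposal is correct and is essentially the paper's argument. The paper computes $\sum_{(\varepsilon_z)}\bigl(\sum_{x\in C}a_{s,x}\varepsilon_x\bigr)^2=2^{|C|}\sum_{x\in C}a_{s,x}^2\le 8T\,2^{|C|}$ by orthogonality of the products $\varepsilon_x\varepsilon_y$, deduces by the same Chebyshev-type counting that at most $2^{|C|}/(2k)$ vectors are bad for each $s$, and takes a union bound over the $k$ residues; your Step 3 refinement is unnecessary, since the non-strict bound already gives exactly $2^{|C|-1}$.
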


\begin{proof}
Note that  $(\varepsilon_x)_{x\in C}$ denotes a choice of signs \(\varepsilon_x\in\{-1,1\}\) for all \(x\in C\). There are $2^{|C|}$ possible choices of the signs $(\varepsilon_x)_{x\in C}$, since each $\varepsilon_x$ is either $1$ or $-1$.
For $x,y\in C$, if $x=y$, then
\[
\sum_{(\varepsilon_z)_{z\in C}}
\varepsilon_x\varepsilon_y
=\sum_{(\varepsilon_z)_{z\in C}}1=2^{|C|},
\]
if $x\neq y$, then
\[
\sum_{(\varepsilon_z)_{z\in C}}
\varepsilon_x\varepsilon_y
=\sum_{(\varepsilon_z)_{z\in C\setminus\{x,y\}}}\left(\sum_{\varepsilon_x, \varepsilon_y}\varepsilon_x\varepsilon_y\right)
=\sum_{(\varepsilon_z)_{z\in C\setminus\{x,y\}}}\left(1-1-1+1\right)
=0,
\]
that is,
\[
\sum_{(\varepsilon_z)_{z\in C}}
\varepsilon_x\varepsilon_y
=
\begin{cases}
2^{|C|},&x=y,\\
0,&x\ne y.
\end{cases}
\]
Since $|a_{s,x}|\le2$ and $T\ge k$, it follows that $|C|\le 2T$ and, for each fixed $s$,
\[
\begin{aligned}
\sum_{(\varepsilon_z)_{z\in C}}\left(\sum_{x\in C}a_{s,x}\varepsilon_x\right)^2
&=\sum_{x,y\in C}a_{s,x}a_{s,y}
  \sum_{(\varepsilon_z)_{z\in C}}\varepsilon_x\varepsilon_y\\
&=2^{|C|}\sum_{x\in C}a_{s,x}^2\\
&\le4|C|2^{|C|}\\
&\le8T2^{|C|}.
\end{aligned}
\]
Therefore
\[
16kT\,
\#\left\{(\varepsilon_x)_{x\in C}:\left|\sum_{x\in C}a_{s,x}\varepsilon_x\right|>4\sqrt{kT}\right\}
<\sum_{(\varepsilon_z)_{z\in C}}\left(\sum_{x\in C}a_{s,x}\varepsilon_x\right)^2
\le8T2^{|C|},
\]
and so
\[
\#\left\{(\varepsilon_x)_{x\in C}:\left|\sum_{x\in C}a_{s,x}\varepsilon_x\right|>4\sqrt{kT}\right\}
\le\frac{2^{|C|}}{2k}.
\]
Thus
\[
\begin{aligned}
&\#\left\{(\varepsilon_x)_{x\in C}:
 \left|\sum_{x\in C}a_{s,x}\varepsilon_x\right|\le4\sqrt{kT}\text{ for every }0\le s<k\right\}\\
&\qquad\ge
2^{|C|}-\sum_{s=0}^{k-1}
\#\left\{(\varepsilon_x)_{x\in C}:\left|\sum_{x\in C}a_{s,x}\varepsilon_x\right|>4\sqrt{kT}\right\}\\
&\qquad\ge
2^{|C|}-k\frac{2^{|C|}}{2k}
=2^{|C|-1}.
\end{aligned}
\]
This completes the proof of Lemma \ref{lem:goodcore}.
\end{proof}

\section{Proof of Theorem \ref{thm:main}}

\begin{proof}[Proof of Theorem \ref{thm:main}]
Let $c$ be a fixed integer and let $T$ be a sufficiently large integer, and let $C$ be the set defined by \eqref{eq:C}. For every integer $0\le s<k$, let
\[
 h_s=2c-\sum_{x\in C}a_{s,x}\varepsilon_x.
\]
By Lemma
\ref{lem:boundary} and \eqref{eq:boundaryeq},
it suffices to determine the number of sign vectors
\[
(\varepsilon_0,\ldots,\varepsilon_{T+k-1})\in\{-1,1\}^{T+k}
\]
satisfying
\[
\sum_{x\in C}a_{s,x}\varepsilon_x+
\sum_{x\in P_s}\varepsilon_x=2c,
\qquad 0\le s<k,
\]
that is,
\begin{equation}\label{eq:target}
\sum_{x\in P_s}\varepsilon_x
=h_s,\qquad 0\le s<k.
\end{equation}

We first prove the upper bound. Since $M_s=|P_s|$, it follows from Lemma \ref{lem:binomial} with $h=h_s$ that
\[
\begin{aligned}
\#\left\{(\varepsilon_x)_{x\in P_s}:
\sum_{x\in P_s}\varepsilon_x
=h_s\right\}
\ll \frac{2^{M_s}}{\sqrt{M_s}}.
\end{aligned}
\]
Since $P_0,\ldots,P_{k-1}$ are pairwise disjoint, the number of choices of $\varepsilon_x$ with $x\in C$ is $2^{|C|}$, and
\[
\{0,1,\cdots,T+k-1\}=C\cup P_0\cup\cdots \cup P_{k-1},\quad |P_s|=M_s,
\]
it follows that $|C|+M_0+\cdots+M_{k-1}=T+k$ and
\[
\begin{aligned}
&\#\{A\subseteq\mathbb N:D_A(n)=c\text{ for all integers }n\ge T\}\\
&\qquad\ll_k
2^{|C|}\prod_{s=0}^{k-1}\frac{2^{M_s}}{\sqrt{M_s}}
=
\frac{2^{|C|+M_0+\cdots+M_{k-1}}}
{\sqrt{M_0M_1\cdots M_{k-1}}}
=\frac{2^{T+k}}
{\sqrt{M_0M_1\cdots M_{k-1}}}.
\end{aligned}
\]
By \eqref{eq:Ms}, we have
\[
M_s=\frac{k-1}{k^2}T+O_k(1),
\qquad 0\le s<k,
\]
and so
\[
M_0M_1\cdots M_{k-1}\gg_k T^k.
\]
It follows that
\[
\#\{A\subseteq\mathbb N:D_A(n)=c\text{ for all integers }n\ge T\}
\ll_k\frac{2^T}{T^{k/2}}.
\]

We next prove the lower bound. Since $|P_s|=M_s$, in order to determine the number of sign vectors satisfying \eqref{eq:target} by using Lemma \ref{lem:binomial}, we must verify $h_s\equiv M_s\pmod2$ and $|h_s|\le M_s$. By \eqref{eq:boundaryeq}, we know that the sum of the coefficients in the
corresponding boundary equation is $2(q_s+1)$. Hence
\[
\sum_{x\in C}a_{s,x}+M_s=2(q_s+1).
\]
It follows from \eqref{eq:target} and $\varepsilon_x\in \{1,-1\}$ that
\begin{equation}\label{eq:parity}
h_s=2c-\sum_{x\in C}a_{s,x}\varepsilon_x
\equiv-\sum_{x\in C}a_{s,x}
\equiv M_s\pmod2.
\end{equation}
By Lemma \ref{lem:goodcore}, there are at
least $2^{|C|-1}$ choices of $(\varepsilon_x)_{x\in C}$ such that
\[
\left|\sum_{x\in C}a_{s,x}\varepsilon_x\right|
\le4\sqrt{kT}
\]
for every $0\le s<k$.

It follows that
\[
|h_s|=\left|2c-\sum_{x\in C}a_{s,x}\varepsilon_x\right|
\le2|c|+4\sqrt{kT}
\ll_{k,c}\sqrt T.
\]
By \eqref{eq:Ms}, for all sufficiently large integers $T$, we have
\begin{equation}\label{eq:centralrange}
|h_s|=\left|2c-\sum_{x\in C}a_{s,x}\varepsilon_x\right|
\ll_{k,c}\sqrt{M_s}\le M_s.
\end{equation}
Thus, by \eqref{eq:parity}, \eqref{eq:centralrange}, Lemma
\ref{lem:binomial} and Lemma \ref{lem:binomial2},
\[
\#\left\{(\varepsilon_x)_{x\in P_s}:
\sum_{x\in P_s}\varepsilon_x
=h_s\right\}
=
\binom{M_s}{\displaystyle
\left(M_s+h_s\right)/2}
\gg_{k,c}\frac{2^{M_s}}{\sqrt{M_s}}.
\]
Since $P_0,\ldots,P_{k-1}$ are pairwise disjoint, the number of choices of $\varepsilon_x$ with $x\in C$ satisfying \eqref{eq:centralrange} is at least $2^{|C|-1}$, and
\[
\{0,1,\cdots,T+k-1\}=C\cup P_0\cup\cdots \cup P_{k-1},\quad |P_s|=M_s,
\]
it follows from \eqref{eq:Ms} that $|C|+M_0+\cdots+M_{k-1}=T+k$ and
\[
\#\{A\subseteq\mathbb N:D_A(n)=c\text{ for all }n\ge T\}
\gg_{k,c}
2^{|C|-1}\prod_{s=0}^{k-1}\frac{2^{M_s}}{\sqrt{M_s}}
\gg_{k,c}\frac{2^T}{T^{k/2}}.
\]

Hence,
\[
\#\left\{A\subseteq\mathbb N:
D_A(n)=c\text{ for all }n\ge T\right\}
\asymp_{k,c}\frac{2^T}{T^{k/2}}.
\]
This completes the proof of Theorem \ref{thm:main}.
\end{proof}

\bigskip
\noindent\textsc{Shuang-Shuang Li}, Office of Scientific Research,\\
Anhui Normal University, Wuhu 241002, P. R. China\\
\textit{E-mail:} \texttt{ddlshuang@163.com}\par
\medskip
\noindent\textsc{Ya-Ting Xu}, School of Mathematics and Statistics,\\
Anhui Normal University, Wuhu 241002, P. R. China\\
\textit{E-mail:} \texttt{980181682@qq.com}\par
\medskip
\noindent\textsc{Xiao-Hui Yan}, School of Mathematics and Statistics,\\
Anhui Normal University, Wuhu 241002, P. R. China\\
\textit{E-mail:} \texttt{yanxiaohui\_1992@163.com}

\end{document}